\documentclass[11pt,a4paper]{amsart}

\usepackage[a4paper,textwidth=6.25in,textheight=8.7in,centering]{geometry}
\usepackage{amsmath,amssymb,amsthm,mathtools,mathrsfs}
\usepackage{bbm}
\usepackage{enumitem}
\usepackage[section]{placeins}
\usepackage{tikz}
\usetikzlibrary{arrows.meta,positioning}
\usepackage{xcolor}
\colorlet{RED}{red}
\usepackage[normalem]{ulem}
\usepackage[colorlinks=true,linkcolor=black,citecolor=blue,urlcolor=blue]{hyperref}

\usepackage{marginnote}
\newtheorem{theorem}{Theorem}[section]
\newtheorem{proposition}[theorem]{Proposition}
\newtheorem{lemma}[theorem]{Lemma}
\newtheorem{corollary}[theorem]{Corollary}
\theoremstyle{definition}

\newtheorem{problem}[theorem]{Problem}
\newtheorem{example}[theorem]{Example}
\theoremstyle{remark}

\newcommand{\R}{\mathbb R}

\newcommand{\norm}[1]{\left\lVert #1\right\rVert}

\newcommand{\conv}{\operatorname{conv}}

\newcommand{\dens}{\operatorname{dens}}
\newcommand{\htop}{\operatorname{ht}}

\newcommand{\sstar}{\subsetneq^{*}}
\newcommand{\ssubstar}{\subseteq^{*}}

\newcommand{\cl}[1]{\overline{#1}}
\newcommand{\map}[3]{#1:#2\to #3}
\newcommand{\set}[2]{\left\{#1\;:\;\,#2\right\}}
\newcommand{\st}{($*$)}
\newcommand{\tn}[1]{\left|\kern-.9pt\left|\kern-.9pt\left|#1\right|\kern-.9pt\right|\kern-.9pt\right|}
\newcommand{\tndot}{\left|\kern-.9pt\left|\kern-.9pt\left|\cdot\right|\kern-.9pt\right|\kern-.9pt\right|}

\title[The renorming-stable ball covering property and $C(K)$-spaces]{The renorming-stable ball covering property and $C(K)$-spaces}
\author{Rui Liu}
\address{School of Mathematical Sciences and LPMC,
Nankai University,
Tianjin 300071,
P.R. China}
\email{ruiliu@nankai.edu.cn}

\author{Jie Shen}
\address{School of Mathematical Sciences and LPMC,
Nankai University,
Tianjin 300071,
P.R. China}
\email{1710064@mail.nankai.edu.cn}

\author{Richard J. Smith}
\address{School of Mathematics and Statistics,
University College Dublin,
Belfield,
Dublin 4,
Ireland}
\email{richard.smith@maths.ucd.ie}

\keywords{Ball covering property, $C(K)$-spaces, renormings, G\^{a}teaux differentiability spaces, property (C) of Corson, countable separation property, scattered compact spaces}
\thanks{Rui Liu and Jie Shen were partially supported by the National Natural Science Foundation of China (Grant No. 12471131).}
\subjclass[2010]{46B03, 46B20, 46B26. }
\date{}

\begin{document}

\begin{abstract}
A Banach space has the ball-covering property (BCP) if its unit sphere can be covered by countably many balls that miss the origin. A Banach space has the renorming-stable ball-covering property (RBCP) if it has the BCP with respect to every equivalent norm. We investigate the RBCP from the perspective of $C(K)$-spaces. In particular, we show that there are many non-separable $C(K)$-spaces having the RBCP. This yields an emphatic negative answer to a question of Cheng, Kato and Zhang from 2020.
\end{abstract}

\maketitle

\section{Introduction}

The ball-covering property (BCP) concerns countable coverings of the unit sphere by balls which avoid the origin. Let \((X,\norm{\cdot})\) be a normed space, write \(S_X=\{x\in X:\norm{x}=1\}\), and put \(B(x,r)=\{y\in X:\norm{y-x}<r\}\). The space has the BCP if there are \(x_n\in X\) and \(r_n>0\), \(n\in\omega\), such that
\[
S_X\subseteq\bigcup_{n\in\omega}B(x_n,r_n)\qquad\text{and}\qquad 0\notin B(x_n,r_n),\quad n\in\omega.
\]
It has the uniform ball-covering property (UBCP) if such a covering can be chosen so that
\[
\sup_{n\in\omega}r_n<\infty\qquad\text{and}\qquad\inf_{n\in\omega}\bigl(\norm{x_n}-r_n\bigr)>0.
\]
Thus the radii are uniformly bounded and the covering balls stay uniformly away from the origin. This terminology can be found in existing ball-covering literature \cite{Cheng2006,Cheng2011,LuoZheng2021}. 

Ball-covering properties are an active topic of research. Recent work has linked ball-covering conditions to renorming characterizations of Banach spaces containing \(\ell_1(\kappa)\) \cite{CLL2023,AMRZ2023}. In spaces of operators, approximation properties have yielded quantitative estimates for the UBCP under a family of equivalent norms \cite{BLS2027}, while failure of the BCP has been established for Calkin algebras under unconditional basis assumptions \cite{SZ2026}. Further developments include geometric tests for the BCP of operator spaces \cite{MPSS2026} and sufficient conditions for the UBCP of dual spaces \cite{LMS2026}. Ball-covering properties have also been studied in Lipschitz function spaces \cite{LMS2026,MPSS2026} and Lipschitz-free spaces \cite{APPP2026}.

It is an easy consequence of the Hahn-Banach separation theorem that if $X$ has the BCP then $(X^*,w^*)$ is separable \cite[Proposition 4.1]{Cheng2006}. Conversely, if \((X^*,w^*)\) is separable, then for every \(\varepsilon>0\), \(X\) admits a \((1+\varepsilon)\)-equivalent norm having the (uniform) BCP \cite{FonfZanco2009, CSZ2009}. Thus \(w^*\)-separability of the dual characterizes the existence of an equivalent BCP norm.
However the BCP is an isometric rather than an isomorphic property: $\ell_\infty$ has the BCP with respect to the canonical supremum norm, but not with respect to every equivalent norm \cite{ChengChengLiu2008}. 
It also follows from \cite{BLS2027,SZ2026} that the space \(\mathcal{B}(X)\) of bounded linear operators on $X$ admits an equivalent norm without the BCP whenever \(X\) has a shrinking 1-unconditional basis. This motivates us to ask when the BCP persists for every equivalent norm. We say that a Banach space \(X\) has the renorming-stable ball-covering property (RBCP) if every equivalent norm on \(X\) has the BCP. It is obvious that every separable Banach space has the RBCP, so it is only meaningful to ask whether $X$ has the RBCP if $X$ is non-separable.

It is natural to investigate this question relative to the class of Banach spaces of the form $C(K)$, where $K$ is a compact Hausdorff space. This is what we do in this paper. The question is non-trivial because $C(K)$ has the RBCP whenever $K$ is metrizable, and $C(\beta\omega)$ (where $\beta\omega$ is the Stone-\v{C}ech compactification of the set $\omega$ of natural numbers) fails the RBCP because, of course, it is a classical fact that this space is linearly isometric to $\ell_\infty$. Moreover, we should point out that the BCP and UBCP in $C(K)$-spaces (with respect to the canonical supremum norm) turn out to be equivalent, and have been characterized in terms of the topology on $K$:~$C(K)$ has these properties if and only if $K$ admits a countable \(\pi\)-basis of \(K\), that is, there exists a countable family of non-empty open sets such that every non-empty open set contains one of them \cite[Theorem~2.1]{LiuLiuLuZheng2022}. Thus, our paper can be seen as a continuation of this work. 

The paper is organised as follows. Section \ref{sec:general_results} consists of some general positive results.  Our first main result, Theorem \ref{thm:main-thm-A}, shows that if $X$ is a G\^ateaux differentiability space (GDS) and has the countable separation property (CSP) introduced by Talponen in \cite{Talponen2009}, then $X$ has the RBCP. As is shown in Section \ref{sec:examples}, $X$ can have these properties and be very far from separable. Thus we can provide an emphatic negative answer to \cite[Question~5.49]{ChengKatoZhang2020}, which asks whether every space having the RBCP must be separable. While this question first appeared explicitly in print in 2020, it had circulated informally since the foundational works of Cheng \cite{Cheng2006} and of Fonf and Zanco \cite{FonfZanco2009}. Theorem \ref{thm:main-thm-A} leads to Corollary \ref{cor:standard-criteria} and Theorem \ref{thm:rn-characterization}. Corollary \ref{cor:standard-criteria} shows that, for a GDS $X$ having Corson's property (C), having the RBCP and the BCP are both equivalent to the separability of $(X^*,w^*)$. Theorem \ref{thm:rn-characterization} shows that, for totally disconnected Radon-Nikod\'ym compact spaces $K$ having countable tightness, if $K$ is separable then $C(K)$ has the RBCP. Having in mind \cite[Theorem~2.1]{LiuLiuLuZheng2022}, it follows that, for such $K$, the space $C(K)$ has the RBCP, the BCP or the UBCP if and only if $K$ has a countable $\pi$-basis or, equally, if $K$ is separable.

In Section \ref{sec:examples}, we provide a host of examples of non-separable $C(K)$ spaces having the RBCP. In Example \ref{ex:almost_disjoint}, we use Theorem \ref{thm:rn-characterization} to show that if $K$ belongs to the class of Isbell--Mr\'owka spaces constructed in the standard way using an uncountable almost disjoint family of subsets of $\omega$ (see e.g.~\cite{HernandezHernandezHrusak2018} for a survey of such spaces), then $C(K)$ is a non-separable space having the RBCP (see Example \ref{ex:almost_disjoint}). We go on to explore the extent to which $C(K)$ can fail to be separable while still retaining the RBCP, with an emphasis on Asplund spaces that behave badly with respect to renormings. According to Corollary \ref{pr:star}, again by Theorem \ref{thm:rn-characterization}, if $K$ is any separable totally disconnected Radon-Nikod\'ym compact space having property {\st} (see \cite{OrihuelaSmithTroyanski2012}), then $C(K)$ has the RBCP. This implies that if $K$ is separable and scattered, then $C(K)$ has the RBCP if $K$ has countable Cantor-Bendixson height or is (more generally) a $\sigma$-discrete or Gruenhage space. We extend the standard method for embedding the compact uncountable ordinal $\omega_1+1$ in a separable space (see e.g. \cite{Nyikos1997}) to show that the same can be done to the 1-point compactification of any (Hausdorff) tree of height at most $\omega_1$ and cardinality at most $\mathfrak{c}$. We use this to show, in Example \ref{ex:strictly_convex}, that there exists an Asplund space $C(K)$ that has the RBCP but fails to admit any equivalent norm whose dual norm is strictly convex. We note further, in Example \ref{ex:Brech_Koszmider}, that (under a forcing model) the compactum $K$ of Brech and Koszmider gives an Asplund space \(C(K)\) of density \(\aleph_2\) with the RBCP, no equivalent Fr\'echet smooth norm, and no uncountable biorthogonal system \cite{BrechKoszmider2011}. Finally, Corollary \ref{co:Moors-Somasundaram} demonstrates that $C(L_A)$ has the RBCP, where $L_A$ is the compact line of Moors and Somasundaram used to solve the long-standing problem of whether every GDS is weak Asplund \cite{moors:somasundaram:06}. Therefore, it is possible for a Banach space to have the RBCP and not even be weak Asplund, let alone separable.

Finally, in Section \ref{sec:remarks_problems}, we present two open problems prompted by this investigation, along with some accompanying remarks. Throughout the paper, we shall introduce notation and terminology as and when it is needed.
\section{General results}\label{sec:general_results}

Before the main results we recall some definitions and general theory. A Banach space is called an Asplund space, a weak Asplund space or a G\^ateaux differentiability space (GDS) if every continuous convex function defined on it is Fr\'echet differentiable at the points of a dense $G_\delta$ subset, G\^ateaux differentiable at the points of a dense $G_\delta$ subset, or G\^ateaux differentiable at the points of a dense subset, respectively. See e.g.~\cite{fabian:97} for a comprehensive introduction to these properties. Every topological space under consideration in this note will be assumed to be Hausdorff; we make explicit mention of the Hausdorff property only when necessary. We recall that, given a compact space $K$, the space $C(K)$ is Asplund if and only if $K$ is scattered, which means that every non-empty subset of $K$ contains a relatively isolated point (see e.g.~\cite[Theorem 1.1.4]{fabian:97}). Given a Banach space $X$, a subset $E \subseteq X^*$ is called total if, given $x \neq 0$, there exists $f \in E$ such that $f(x)\neq 0$. The existence of a countable total subset of $X^*$ is equivalent to the separability of $(X^*,w^*)$ \cite[Exercise 3.85]{FabianEtAl2011}. We denote by $\omega$ and $\omega_1$ the set of natural numbers and countable ordinals, respectively.

Let $N$ be an equivalent norm on a Banach space $X$. Recall that $j \in B_{(X,N)^*}$ is called a $w^*$-exposed point of $B_{(X,N)^*}$ if there exists $u \in B_{(X,N)}$ such that $j(u)=1$ and $j'(u)<1$ whenever $j' \in B_{(X,N)^*}\setminus\{j\}$. Any such point $u$ is said to expose $j$. We denote by $J_N$ the set of all $w^*$-exposed points of $B_{(X,N)^*}$. By the classic lemma of \v{S}mulyan (\cite[Corollary 7.20]{FabianEtAl2011}), if $u$ exposes $j \in J_N$ then $u$ is a smooth point of $N$, that is, a point of G\^ateaux differentiability of $N$; conversely, if $u \in B_{(X,N)}$ is a smooth point of $N$ then $u$ exposes some (unique) $j_u \in J_N$. We denote by $G_N$ the set of smooth points of $N$ that belong to $B_{(X,N)}$.

Our first result is essentially \cite[Theorem 0.2]{Cheng2011} relative to a given equivalent norm $N$ on $X$. We provide a short proof only to take the opportunity to correct a small gap in the proof of \cite[Theorem 2.6]{ChengChengShi2009}, on which \cite[Theorem 0.2]{Cheng2011} depends.

\begin{proposition}[{cf.~\cite[Theorem 0.2]{Cheng2011}}]\label{lem:fixed-norm-characterization}
Let \(N\) be an equivalent norm on the Banach space \(X\). If \(J_N\) contains a countable total subset, then \(N\) has the BCP. If, in addition, $G_N$ dense in \(S_N\), then the converse also holds. Consequently, a GDS \(X\) has the RBCP if and only if, for every equivalent norm \(N\), the corresponding set \(J_N\) contains a countable total subset.
\end{proposition}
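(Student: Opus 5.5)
We prove the three assertions in turn: the first two rest on Hahn--Banach separation and \v{S}mulyan's lemma, the third combines them.

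\emph{Sufficiency.} Suppose $\{j_n\}_{n\in\omega}\subseteq J_N$ is total, with $u_n\in B_{(X,N)}$ exposing $j_n$; necessarily $N(u_n)=1$. For $m,n\in\omega$ consider the ball $B_N(m u_n, m-1/m)$ (for $m\ge 2$, say), which misses the origin since $N(mu_n)=m>m-1/m$. Take $x\in S_N$. By totality there is $n$ with $j_n(x)\neq 0$, and we may assume $j_n(x)>0$ by passing to $-x$; to keep this symmetric we also include the balls centred at $-mu_n$. We claim $N(mu_n-x)<m-1/m$ for some $m$. The idea is that $N(mu_n - x)-m = m\,(N(u_n - x/m)-N(u_n))$, and as $m\to\infty$ this tends to $-N'(u_n)(x)$. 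By \v{S}mulyan, $u_n$ is a smooth point of $N$ with derivative $j_n$, so the limit equals $-j_n(x)<0$. Hence for large $m$ we get $N(mu_n-x)-m<-j_n(x)/2<-1/m$. Thus countably many balls cover $S_N$ and $N$ has the BCP.

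\emph{Necessity under density of $G_N$.} Let $B(x_k,r_k)$ cover $S_N$ with $N(x_k)\ge r_k$. By Hahn--Banach there is $f_k\in S_{(X,N)^*}$ with $f_k(y)\ge N(x_k)-r_k\ge 0$ on $B(x_k,r_k)$, and indeed $f_k>0$ on the open ball; so $\{f_k\}$ is total, since any $x\ne0$ has $x/N(x)$ in some ball. These $f_k$ need not be $w^*$-exposed, and this is the gap to be repaired. The open set $U_k=B(x_k,r_k)\cap S_N$ is nonempty (it contains $x/N(x)$ in the relevant case; discard empty ones), so by density it contains some $u_k\in G_N$. Replace $f_k$ by $j_{u_k}\in J_N$. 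We then need to show $\{j_{u_k}\}$ is total.

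Here is the key point. Since $u_k$ lies in the open ball, so does $u_k+t(x_k-u_k)$ for small $t\in(0,1)$, and actually the segment toward $x_k$ lies in the ball by convexity. We need $j_{u_k}(y)>0$ for all $y$ in $U_k$. This fails in general, so instead we choose the $u_k$ more carefully. For each $k$ and each rational $q$, density gives a countable set $D_k\subseteq G_N\cap U_k$ dense in $U_k$. We claim $\{j_u: u\in\bigcup_k D_k\}$ is total. Given $x\in S_N$ with $x\in U_k$, suppose $j_u(x)=0$ for all $u\in D_k$. Take $u\in D_k$ with $N(u-x)<\varepsilon$. Then $1=j_u(u)=j_u(u-x)\le N(u-x)<\varepsilon$, a contradiction for $\varepsilon<1$. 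So in fact density alone gives totality. This is the main step, and it shows that the BCP is needed only to supply separability of $S_N$ modulo the countable cover. Indeed, the honest requirement is a countable set in $G_N$ whose closure meets every ray, and that set is produced from the cover via $w^*$-separability. I expect the correct handling to be: use the BCP to get $(X^*,w^*)$ separable, and then use the $D_k$ argument on separable pieces.

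\emph{Consequence.} In a GDS, $N$ is Gâteaux differentiable on a dense set, so $G_N$ is dense in $S_N$ by homogeneity. The equivalence then follows from the first two parts applied to each equivalent norm.
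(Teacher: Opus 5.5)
Your sufficiency argument is correct and self-contained: it is a direct directional-derivative computation via \v{S}mulyan, where the paper simply cites Cheng. The deduction that $G_N$ is dense in $S_N$ for a GDS is also correct. The gap is in the converse. The step ``density gives a countable set $D_k\subseteq G_N\cap U_k$ dense in $U_k$'' requires $U_k$ to be separable. A non-empty relatively open subset of $S_N$ is separable only when $X$ is, because the open cone it generates would be a separable open subset of $X$. So this step fails in exactly the cases of interest. Your conclusion that ``density alone gives totality'' is also false: in $\ell_2(\Gamma)$ with $\Gamma$ uncountable every point of the sphere is smooth, yet any countable family of functionals has countable joint support and so annihilates some $e_\gamma$. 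Your fallback, extracting $w^*$-separability of $X^*$ from the BCP and working on ``separable pieces'', does not help either. $w^*$-separability of the dual gives no separability of $X$ or of $U_k$ (think of $\ell_\infty$). As written, the converse is not proved, and with it the ``only if'' half of the GDS equivalence.

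The missing idea is to move the balls rather than look for more smooth points inside them. You correctly observed that $j_{u_k}$ need not be positive on $B_N(x_k,r_k)$ when $u_k$ is merely some smooth point of that ball. The remedy is to replace the ball by one on which it is positive.
\begin{enumerate}[label=\textup{(\arabic*)}]
\item Shrink the radii: use all radii $(1-1/m)r_k$, $m\ge 2$. The balls are open, so this is still a cover, and now $\delta_k:=N(x_k)-r_k>0$.
\item By density of $G_N$, choose $u_k\in G_N$ so close to $x_k/N(x_k)$ that $z_k:=N(x_k)u_k$ satisfies $N(z_k-x_k)<\delta_k$.
\item Then $B_N(x_k,r_k)\subseteq B_N(z_k,N(z_k))$, and the larger ball still misses $0$, so these larger balls cover $S_N$.
\item Since $j_{u_k}(z_k)=N(z_k)$ and $j_{u_k}\in B_{(X,N)^*}$, every $y\in B_N(z_k,N(z_k))$ satisfies $j_{u_k}(y)=j_{u_k}(z_k)-j_{u_k}(z_k-y)\ge N(z_k)-N(z_k-y)>0$.
\end{enumerate}
Hence every $y\in S_N$ has $j_{u_k}(y)>0$ for some $k$, so $\{j_{u_k}\}$ is a countable total subset of $J_N$. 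Only one smooth point per ball is used, and no separability enters.
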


\begin{proof}
 That $N$ has the BCP whenever $J_N$ contains a countable total subset is the sufficiency part of the proof of \cite[Theorem 0.2]{Cheng2011} (in the statement of this theorem $X$ is a GDS, but this property is not needed for this implication). Now assume $G_N$ is dense in $S_N$ and let $\set{B_N(x_i,r_i)}{i \in \omega}$ be a BCP covering. If necessary, by replacing $r_i$ by $(1-1/m)r_i$, $m \geq 2$, and re-indexing, we shall assume $\delta_i:=N(x_i)-r_i>0$ for all $i$. By the density of $G_N$, for each $i$ pick $u_i \in G_N$ close enough to $x_i/N(x_i)$ so that $z_i :=N(x_i)u_i \in B_N(x_i,\delta_i)$. Evidently, $\set{B_N(z_i,N(x_i))}{i \in \omega}$ is another BCP covering. We claim that $\set{j_{u_i}}{i \in \omega} \subseteq J_N$ is a total subset. Indeed, given $y \in S_N$, pick $i$ such that $N(z_i-y) < N(x_i)$. Then
 \[
  j_{u_i}(y) = j_{u_i}(z_i)-j_{u_i}(z_i-y) > N(x_i) - N(x_i) =0. \qedhere
  \]
\end{proof}

Our most general sufficient condition for the RBCP requires the countable separation property (CSP). A Banach space $X$ is said to have the CSP if every total subset $F \subseteq X^*$ contains a countable total subset \cite{Talponen2009}. Evidently, the CSP implies the separability of $(X^*,w^*)$.

\begin{theorem}\label{thm:main-thm-A}
Let \(X\) be a GDS with the CSP. Then \(X\) has the RBCP.
\end{theorem}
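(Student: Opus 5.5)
By Proposition \ref{lem:fixed-norm-characterization}, since $X$ is a GDS, it suffices to show the following: for every equivalent norm $N$ on $X$, the set $J_N$ contains a countable total subset. Given the CSP, this reduces further to showing that $J_N$ itself is total. Once $J_N$ is total, the CSP yields a countable total subset of $J_N$, and the Proposition then gives that $N$ has the BCP. Since $N$ was arbitrary, $X$ has the RBCP.

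So the key step is to prove that $J_N$ is total whenever $X$ is a GDS. First, being a GDS is an isomorphic property: it is defined through all continuous convex functions, so it does not depend on the norm. Hence the convex continuous function $N$ is Gâteaux differentiable on a dense subset of $X$. By positive homogeneity, the set of smooth points is a cone. So $G_N \cap S_N$ is dense in $S_N$, and every $u \in G_N\cap S_N$ exposes a unique $j_u \in J_N$, namely the Gâteaux derivative $N'(u)$ (Šmulyan).

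Now take $y \in X$ with $y\neq 0$; we may assume $N(y)=1$. By density, pick $u\in G_N\cap S_N$ with $N(u-y)<1$. Then
\[
j_u(y) = j_u(u) - j_u(u-y) \geq 1 - N(u-y) > 0,
\]
so $j_u(y)\neq 0$. This is exactly the computation at the end of the proof of Proposition \ref{lem:fixed-norm-characterization}. Hence $J_N$ is total.

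The only point needing care is that the GDS property passes to the norm $N$, i.e.\ that smooth points of $N$ are dense; as noted, this is immediate from the definition. Everything else is an application of the CSP and of Proposition \ref{lem:fixed-norm-characterization}, so I do not expect a genuine obstacle.
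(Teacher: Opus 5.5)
Your proposal is correct and follows essentially the same route as the paper. Density of smooth points of $N$ (from the GDS property) makes $J_N$ total, the CSP then extracts a countable total subset, and Proposition~\ref{lem:fixed-norm-characterization} gives the BCP for $N$. Your explicit estimate $j_u(y)\geq 1-N(u-y)>0$ simply spells out the totality step, which the paper phrases as $J_N$ being $1$-norming.
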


\begin{proof}
Fix an equivalent norm \(N\). Since \(X\) is a GDS, the smooth points \(G_N\) are dense in \(S_N\). Hence \(J_N\) is \(1\)-norming and therefore total. By the CSP, \(J_N\) contains a countable total subset. Proposition~\ref{lem:fixed-norm-characterization} shows that \(N\) has the BCP. Since \(N\) was arbitrary, \(X\) has the RBCP.
\end{proof}

The next three results show that, under some reasonably general conditions, a Banach space $X$ or a $C(K)$-space has the RBCP if and only if $(X^*,w^*)$ is separable or $K$ is separable, respectively. For these results we recall Corson's much studied property (C): we say that a Banach space $X$ has (C) if every collection of closed convex subsets of $X$ having empty intersection admits a countable subcollection having empty intersection \cite{Corson1961}. We also recall Pol's characterization of (C) in terms of a convex cousin of countable tightness: $X$ has (C) if and only if, given $A \subseteq X^*$ and $f \in \cl{A}^{w^*}$, there exists a countable set $A_0 \subseteq A$ such that $f \in \cl{\conv}^{w^*}(A_0)$ \cite[Theorem 3.4]{Pol1980}. Using this characterization, it is straightforward to prove that if $X$ has (C) and $(X^*,w^*)$ is separable, then $X$ has the CSP \cite[Proposition 3.1]{Talponen2009}.

\begin{corollary}
\label{cor:standard-criteria}
Let \(X\) be a GDS with (C). The following conditions are equivalent:
    \begin{enumerate}[label=\textup{(\roman*)}]
        \item \(X\) has the RBCP;
        \item some equivalent norm on \(X\) has the BCP;
        \item \((X^*,w^*)\) is separable.
    \end{enumerate}
\end{corollary}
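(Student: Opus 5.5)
The plan is to prove the cycle of implications (i)~$\Rightarrow$~(ii)~$\Rightarrow$~(iii)~$\Rightarrow$~(i); each step is assembled from results already recalled in the paper.

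\emph{(i)~$\Rightarrow$~(ii).} This is immediate. If every equivalent norm on $X$ has the BCP, then in particular the original norm does.

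\emph{(ii)~$\Rightarrow$~(iii).} Suppose an equivalent norm $N$ on $X$ has the BCP. We apply \cite[Proposition 4.1]{Cheng2006}, the Hahn--Banach consequence recalled in the introduction, to the space $(X,N)$. It gives that $((X,N)^*,w^*)$ is separable. The dual of $(X,N)$ is $X^*$ as a vector space with the same $w^*$-topology, since the $w^*$-topology depends only on the pairing with $X$ and not on the choice of equivalent norm. Hence $(X^*,w^*)$ is separable. Alternatively, one can argue directly: for each ball $B_N(x_n,r_n)$ missing $0$, separate it from $0$ by a functional $f_n$. If $y\neq 0$, then $y/N(y)$ lies in some ball, so $f_n(y)\neq 0$. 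Thus $\{f_n\}$ is a countable total set, and by \cite[Exercise 3.85]{FabianEtAl2011} this gives $w^*$-separability.

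\emph{(iii)~$\Rightarrow$~(i).} Assume $(X^*,w^*)$ is separable. Since $X$ has (C), \cite[Proposition 3.1]{Talponen2009}, which rests on Pol's characterization of (C), shows that $X$ has the CSP. Because $X$ is also a GDS, Theorem~\ref{thm:main-thm-A} then gives that $X$ has the RBCP.

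There is no real obstacle in this argument. The only point needing a little care is in (ii)~$\Rightarrow$~(iii): the BCP is assumed only for some equivalent norm, so we must check that $w^*$-separability of the dual is a renorming-invariant property. This holds because the dual space and its $w^*$-topology are unchanged under equivalent renorming.
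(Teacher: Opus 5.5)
Your proof is correct and uses the same cycle of implications as the paper. Your steps (i)~\(\Rightarrow\)~(ii) and (iii)~\(\Rightarrow\)~(i) are identical to the paper's: Talponen's result gives the CSP, then Theorem~\ref{thm:main-thm-A} applies.

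The only genuine difference is in (ii)~\(\Rightarrow\)~(iii).
\begin{itemize}
\item The paper derives this step from Proposition~\ref{lem:fixed-norm-characterization}. That route uses the GDS hypothesis: smooth points of \(N\) are dense in \(S_N\), so the BCP covering yields a countable total subset of \(J_N\).
\item You instead use the elementary Hahn--Banach argument of \cite[Proposition 4.1]{Cheng2006}. Each ball \(B_N(x_n,r_n)\) missing \(0\) is separated from \(0\) by some \(f_n\); for example, take a norming functional of \(x_n\), so that \(f_n>N(x_n)-r_n\ge 0\) on the ball. The functionals \(f_n\) then form a countable total set.
\end{itemize}

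Your route shows that (ii)~\(\Rightarrow\)~(iii) holds in every Banach space, so the GDS assumption is needed only for (iii)~\(\Rightarrow\)~(i). The paper's route yields the slightly stronger fact that the countable total set can be chosen among the \(w^*\)-exposed points of \(B_{(X,N)^*}\). That fits the paper's fixed-norm characterization, but it is more than the corollary requires. Your remark that \(w^*\)-separability of the dual is invariant under equivalent renorming is correct and is exactly the point needed.
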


\begin{proof}
The implication \textup{(i)} \(\Rightarrow\) \textup{(ii)} is immediate. Since \(X\) is a GDS, \textup{(ii)} \(\Rightarrow\) \textup{(iii)} follows from Proposition~\ref{lem:fixed-norm-characterization}. If \textup{(iii)} holds, then $X$ has the CSP from above. Theorem~\ref{thm:main-thm-A} now gives \textup{(i)}.
\end{proof}

Recall from the Introduction that \cite[Theorem 2.1]{LiuLiuLuZheng2022} characterizes both the BCP and UBCP of
\((C(K),\norm{\cdot}_\infty)\) in terms of the existence of a countable \(\pi\)-basis of \(K\). We now provide a characterization of the RBCP in $C(K)$-spaces, under some mild assumptions.

\begin{corollary}\label{cor:ck-general-criteria}
    Let \(K\) be a compact space such that \(C(K)\) is a GDS with (C). The following conditions are equivalent:
    \begin{enumerate}[label=\textup{(\roman*)}]
        \item \(C(K)\) has the RBCP;
        \item \((C(K),\norm{\cdot}_\infty)\) has the BCP;
        \item \((C(K),\norm{\cdot}_\infty)\) has the UBCP;
        \item \(K\) has a countable \(\pi\)-basis;
        \item \(K\) is separable.
    \end{enumerate}
\end{corollary}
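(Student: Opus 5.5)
The plan is to feed Corollary~\ref{cor:standard-criteria} with $X=C(K)$ and combine it with \cite[Theorem~2.1]{LiuLiuLuZheng2022}. Only one genuinely new link is required: for compact $K$, the separability of $(C(K)^*,w^*)$ is equivalent to the separability of $K$. The chain of implications will be (i)$\Rightarrow$(v)$\Rightarrow$(iv)$\Leftrightarrow$(ii)$\Leftrightarrow$(iii) and (ii)$\Rightarrow$(i). Alternatively, we can route everything through Corollary~\ref{cor:standard-criteria}(iii).

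First, (ii)$\Leftrightarrow$(iii)$\Leftrightarrow$(iv) is exactly \cite[Theorem~2.1]{LiuLiuLuZheng2022}, so nothing needs proving there. Next, (ii)$\Rightarrow$(i): by Corollary~\ref{cor:standard-criteria} with $X=C(K)$, which applies since $C(K)$ is a GDS with (C), the existence of some equivalent norm with the BCP (here the sup norm) implies the RBCP. The implication (i)$\Rightarrow$(ii) is trivial.

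It remains to connect (v) with the rest. For (iv)$\Rightarrow$(v), pick a point in each member of a countable $\pi$-basis. The resulting countable set meets every non-empty open set, so it is dense. For (v)$\Rightarrow$(iv) there is a route avoiding pure topology. If $D\subseteq K$ is countable and dense, then the Dirac functionals $\{\delta_d : d\in D\}$ form a countable total subset of $C(K)^*$: if $f(d)=0$ for all $d\in D$ then $f=0$ by continuity. Hence $(C(K)^*,w^*)$ is separable, and Corollary~\ref{cor:standard-criteria} gives (i), and therefore (ii), so (iv) follows from the cited theorem. Conversely, for (i)$\Rightarrow$(v): Corollary~\ref{cor:standard-criteria} gives that $(C(K)^*,w^*)$ is separable, so there is a countable total set $\{\mu_n\}$ of Radon measures. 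I then want to produce a countable dense subset of $K$.

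The main obstacle is this last step, since separability of $(C(K)^*,w^*)$ does not in general force $K$ to be separable ($\beta\omega\setminus\omega$-type issues aside, e.g.\ $K$ carrying a strictly positive measure). I would sidestep it entirely by routing (i)$\Rightarrow$(ii)$\Rightarrow$(iv)$\Rightarrow$(v) as above, which avoids any measure-theoretic argument: (i) trivially gives (ii), the cited theorem gives a countable $\pi$-basis, and that gives separability. So the full cycle is (i)$\Rightarrow$(ii)$\Leftrightarrow$(iii)$\Leftrightarrow$(iv)$\Rightarrow$(v)$\Rightarrow$(i). The last implication uses the Dirac functionals together with Corollary~\ref{cor:standard-criteria}(iii)$\Rightarrow$(i). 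Since everything rests on earlier results, the only care needed is in checking the Dirac totality argument.
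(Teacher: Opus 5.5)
Your final cycle (i)$\Rightarrow$(ii)$\Leftrightarrow$(iii)$\Leftrightarrow$(iv)$\Rightarrow$(v)$\Rightarrow$(i) is correct and is exactly the paper's proof. In particular, you obtain (v)$\Rightarrow$(i) from the totality of $\{\delta_d : d\in D\}$ together with Corollary~\ref{cor:standard-criteria}, and the remaining links come from the cited theorem and from picking one point in each member of the $\pi$-basis. Your extra observations, such as (ii)$\Rightarrow$(i) directly via Corollary~\ref{cor:standard-criteria}, are valid but redundant, and the measure-theoretic detour for (i)$\Rightarrow$(v) is, as you noticed, unnecessary.
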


\begin{proof}
    Condition \textup{(i)} implies \textup{(ii)}, and \cite[Theorem 2.1]{LiuLiuLuZheng2022} gives the equivalence of \textup{(ii)}, \textup{(iii)}, and \textup{(iv)}. Condition \textup{(iv)} implies \textup{(v)} by selecting one point from each member of a countable \(\pi\)-basis. If \textup{(v)} holds and \(D\subseteq K\) is countable and dense, then \(\{\delta_d:d\in D\}\) is total in \(C(K)^*\), so Corollary~\ref{cor:standard-criteria} gives \textup{(i)}.
\end{proof}

Our second characterization of the RBCP in $C(K)$-spaces operates under assumptions that are less general but easier to check because they involve purely topological properties of $K$. Recall that a compact space $K$ is Radon-Nikod\'ym compact if it is homeomorphic to a $w^*$-compact subset of a dual Banach space having the Radon-Nikod\'ym property. Every scattered compact space is totally disconnected and Radon-Nikod\'ym. A compact space $K$ is called countably tight if, given $A \subseteq K$ and $x \in \cl{A}$, there exists a countable set $A_0 \subseteq A$ such that $x \in \cl{A_0}$.

\begin{theorem}\label{thm:rn-characterization}
Let \(K\) be a totally disconnected Radon--Nikod\'ym compact space having countable tightness. The following conditions are equivalent:
\begin{enumerate}[label=\textup{(\roman*)}]
    \item \(C(K)\) has the RBCP;
    \item \((C(K),\norm{\cdot}_\infty)\) has the BCP;
    \item \((C(K),\norm{\cdot}_\infty)\) has the UBCP;
    \item \(K\) has a countable \(\pi\)-basis;
    \item \(K\) is separable.
\end{enumerate}
\end{theorem}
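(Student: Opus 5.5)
The plan is to reduce Theorem~\ref{thm:rn-characterization} to Corollary~\ref{cor:ck-general-criteria}. That corollary already gives the equivalence of (i)--(v) whenever $C(K)$ is a GDS with (C). So it suffices to show that, for $K$ totally disconnected, Radon--Nikod\'ym and countably tight, the space $C(K)$ is a GDS and has Corson's property (C). (The implications (i)$\Rightarrow$(ii)$\Leftrightarrow$(iii)$\Leftrightarrow$(iv)$\Rightarrow$(v) need no hypotheses beyond \cite[Theorem 2.1]{LiuLiuLuZheng2022} anyway. The real content is (v)$\Rightarrow$(i), via Theorem~\ref{thm:main-thm-A}.)

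\emph{GDS.} I would use the known result that $C(K)$ is a GDS, indeed weak Asplund, when $K$ is a totally disconnected Radon--Nikod\'ym compact. Arvanitakis showed that totally disconnected RN compacta are quasi-Radon--Nikod\'ym, and Fabian's work shows that $C(K)$ is weak Asplund, hence a GDS, in that setting. Alternatively, I would invoke the stronger fact that such $K$ admit a fragmenting lower semicontinuous metric, which yields a dense set of G\^ateaux differentiability points for convex functions. If only the scattered case were available, Asplundness would suffice. In the general case I would cite the literature rather than reprove it.

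\emph{Property (C).} By Pol's characterization, I must show the following: for $A\subseteq C(K)^*$ and $\mu\in\cl{A}^{w^*}$, $\mu$ lies in $\cl{\conv}^{w^*}(A_0)$ for some countable $A_0\subseteq A$. This is the expected main obstacle, since countable tightness of $K$ concerns $K$ itself, not the dual ball $B_{C(K)^*}$. My first approach is to use that an RN compact $K$ has the property that $(B_{C(K)^*},w^*)$ is again RN compact. Moreover, for RN compacta, countable tightness of $K$ passes to $P(K)$ and the dual ball, in the spirit of results of Plebanek and Frankiewicz--Plebanek--Ryll-Nardzewski that (C) for $C(K)$ is tied to countable tightness of $P(K)$. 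Countable tightness of $(B_{C(K)^*},w^*)$ gives Pol's condition directly, with $A_0$ countable and $\mu\in\cl{A_0}^{w^*}$. If transferring tightness is not available in this generality, I would instead use that for RN $K$ every Radon measure is supported on a set that is in a suitable sense metrizable, or use Arvanitakis' structure to approximate measures by atomic ones. I would then reduce to convex combinations of Dirac measures, where the tightness of $K$ applies coordinatewise.

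With both properties in hand, Corollary~\ref{cor:ck-general-criteria} yields all the equivalences at once. I expect the (C) step to require the most care, specifically justifying that countable tightness lifts from $K$ to the $w^*$-topology on $C(K)^*$ under the RN hypothesis.
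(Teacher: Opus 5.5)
Your reduction matches the paper's: show that $C(K)$ is a GDS with (C), then apply Corollary~\ref{cor:ck-general-criteria}. For the GDS half you, like the paper, rely on the literature, which is acceptable; the paper uses that $C(K)$ is Asplund generated \cite[Theorem~1.5.4]{fabian:97}, hence weak Asplund.

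The genuine gap is the (C) half. It is the only real content of the theorem beyond the corollary, and your proposal does not establish it. Your main route rests on the claim that, for RN compacta, countable tightness of $K$ lifts to $(B_{C(K)^*},w^*)$ or to $P(K)$. You give neither an argument nor a specific theorem for this, and it is exactly the hard point, since transferring tightness from $K$ to spaces of measures is delicate. Knowing that the dual ball is RN compact contributes nothing here, because RN compacta need not be countably tight ($\omega_1+1$ is scattered, hence RN, but not countably tight). You would also be proving more than needed: countable tightness of the dual ball is a priori stronger than Pol's convex criterion.

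Your fallback does not work as described. Radon measures on a totally disconnected RN compact need not be atomic (the Cantor set carries a non-atomic measure). Even for finitely supported measures, countable tightness of the powers $K^n$ only controls measures with at most $n$ atoms. A given $\mu\in\cl{A}^{w^*}$ may only be approached by elements of $A$ with unboundedly many atoms, or none. You offer no way to replace $A$ by atomic approximants while keeping the countable subset inside $A$.

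The missing idea is to avoid measures entirely and exploit the structure of $K$. By the Reznichenko--Arvanitakis theorem \cite[Theorem~3.6 and Corollary~3.7]{Arvanitakis2002}, $K$ is a closed subspace of a product $\prod_{n\in\omega}K_n$ of scattered compacta, where each $K_n$ is a continuous image of $K$. Countable tightness passes to continuous images of compact spaces, so every $K_n$ is scattered and countably tight. Pol's theorem \cite[Theorem~4.1]{Pol1980} then gives (C) for $C(\prod_{n}K_n)$. The restriction map onto $C(K)$ is surjective by Tietze's theorem, hence a quotient map, and (C) passes to quotients. This is where total disconnectedness and countable tightness are actually used.
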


\begin{proof}
We verify the hypotheses of Corollary~\ref{cor:ck-general-criteria}. As $K$ is a totally disconnected Radon--Nikod\'ym compact space, it can be regarded as a closed subspace of a product \(\prod_{n\in\omega}K_n\) of countably many compact scattered spaces (see \cite[Theorem~3.6 and Corollary~3.7]{Arvanitakis2002}, wherein Arvanitakis credits this result to E.~Reznichenko).

In the proof of \cite[Theorem~3.6]{Arvanitakis2002}, each \(K_n\) is expressed as a continuous image of \(K\). As $K$ is countably tight, each $K_n$ is countably tight. We provide a quick proof of this fact. Fix \(n\in\omega\) and let $\map{q}{K}{K_n}$ be a continuous surjective map. Let $A \subseteq K_n$ and $y \in \cl{A}$. By surjectivity, compactness and continuity, $\cl{A} \subseteq q(\cl{q^{-1}(A)})$, so $y=q(x)$ for some $x \in \cl{q^{-1}(A)}$. Because $K$ is countably tight, $x \in \cl{B}$ for a countable set $B \subseteq q^{-1}(A)$. Therefore $q(B) \subseteq A$ and $y=q(x) \in q(\cl{B}) \subseteq \cl{q(B)}$, again by continuity.

Each \(K_n\) is scattered and countably tight, so $C(\prod_{n\in\omega}K_n)$ has (C) by \cite[Theorem 4.1]{Pol1980}. Since \(K\) is closed in this product, the restriction operator onto \(C(K)\) is a quotient map. Property (C) passes to quotients, and hence \(C(K)\) has (C).

According to \cite[Theorem~1.5.4]{fabian:97}, \(C(K)\) is Asplund generated, which in turn implies that it is weak Asplund by \cite[Section~1.3]{fabian:97}, and hence a GDS. Thus Corollary \ref{cor:ck-general-criteria} applies and gives the five stated equivalences.
\end{proof}

\section{Examples}\label{sec:examples}

We begin the section with what is, from one point of view, the `simplest' class of examples of non-separable $C(K)$-spaces having the RBCP (though from other perspectives it is a rich, complicated class). Recall that a family $\mathcal{A}$ of sets is called almost disjoint if $A \cap B$ is finite whenever $A,B \in \mathcal{A}$ are distinct. We shall write $\sqcup$ to denote unions of disjoint sets and we denote by $\alpha L$ the $1$-point compactification of a locally compact space $L$. In this section, natural numbers $n\in\omega$ will be regarded in the set-theoretic sense, i.e.~$n$ is the set of all natural numbers strictly less than $n$.

\begin{example}\label{ex:almost_disjoint}
Let $\mathcal{A}$ be an uncountable almost disjoint family of infinite subsets of $\omega$. We denote by $K_\mathcal{A}$ the standard Isbell-Mr\'owka compact scattered space associated with this family \cite{HernandezHernandezHrusak2018}:~$K_\mathcal{A}=\alpha(\omega \sqcup \mathcal{A})$, where every point in $\omega$ is isolated and each $A \in \mathcal{A}$ has the neighbourhood base of compact open sets $W(A,n):=\{A\} \cup (A\setminus n)$, $n\in\omega$. As $K_\mathcal{A}$ is scattered it is totally disconnected and Radon-Nikod\'ym.
 
It is clear that $K_\mathcal{A}$ is separable because $\omega \subseteq K_\mathcal{A}$ is dense. It is also easy to check directly that $K_\mathcal{A}$ is countably tight. Hence $C(K_\mathcal{A})$ has the RBCP by Theorem \ref{thm:rn-characterization}.
 
Finally, it is well-known that $K_\mathcal{A}$ is non-metrizable (for example, because it is compact, scattered and uncountable) and thus $C(K_\mathcal{A})$ is non-separable. This non-separability can also be witnessed directly by considering the uncountable family of indicator functions $f_A :=\mathbf{1}_{W(A,0)} \in C(K_\mathcal{A})$, $A \in \mathcal{A}$, which satisfy $\norm{f_A - f_B}_\infty=1$ whenever $A \neq B$.
\end{example}

Having established that there exist non-separable Banach spaces having the RBCP, which gives a negative answer to \cite[Question~5.49]{ChengKatoZhang2020}, we spend the rest of the section presenting more examples that show just how non-separable a Banach space can be while still possessing the RBCP. Roughly speaking, we show that separability can fail spectacularly.

The Isbell-Mr\'owka spaces $K_\mathcal{A}$ have Cantor-Bendixson height $3$. It turns out that the conclusion that $C(K_\mathcal{A})$ has the RBCP can be extended effortlessly to a much wider class, including all spaces $K$ having countable Cantor-Bendixson height. The authors of \cite{OrihuelaSmithTroyanski2012} introduce a weak separation property, labelled {\st}, as a tool for studying strict convexity in Banach spaces. A locally compact space $L$ has {\st} if there exist families $(\mathscr{U}_n)_{n\in\omega}$ of open subsets of $L$ having the property that, given distinct $x,y \in L$, one can find $n\in\omega$ such that $\{x,y\} \cap \bigcup\mathscr{U}_n\neq\varnothing$ and $\{x,y\} \cap U$ is at most a singleton for all $U \in \mathscr{U}_n$.

The class of compact spaces having {\st} is very broad:~it includes all compact spaces having countable Cantor-Bendixson height, and more generally all compact $\sigma$-discrete spaces, descriptive compact spaces and Gruenhage compact spaces (see e.g.~\cite{Raja2003,Smith2009,Smith2026,SmithTroyanski2010} as well as \cite{OrihuelaSmithTroyanski2012} for detailed information on these classes). Two results concerning {\st} are relevant to us here. Firstly, the 1-point compactification of any locally compact space having {\st} is countably tight \cite[Corollary 2.4]{OrihuelaSmithTroyanski2012}. Secondly, a scattered compact space $K$ has {\st} if and only if $C(K)$ admits an equivalent norm with a strictly convex dual norm \cite[Theorem 3.1]{OrihuelaSmithTroyanski2012}. The next result is an immediate consequence of Theorem \ref{thm:rn-characterization}.

\begin{corollary}\label{pr:star}
If the compact space $K$ is separable, totally disconnected, Radon--Nikod\'ym and has {\st}, then $C(K)$ has the RBCP.
\end{corollary}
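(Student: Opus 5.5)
The plan is to reduce the statement directly to Theorem \ref{thm:rn-characterization}. That theorem already assumes $K$ is totally disconnected and Radon--Nikod\'ym, and it has countable tightness as its only further hypothesis. Once $K$ is known to be countably tight, the separability of $K$ gives condition (v) of that theorem, and the equivalence (v) $\Leftrightarrow$ (i) yields that $C(K)$ has the RBCP. So the only thing left to establish is that a compact space with {\st} is countably tight.

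For this I will use the first of the two facts recalled before the corollary: the $1$-point compactification of a locally compact space with {\st} is countably tight \cite[Corollary 2.4]{OrihuelaSmithTroyanski2012}. Since $K$ is compact, it is in particular locally compact. There are two cleanest ways to apply the cited result.

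\begin{enumerate}[label=\textup{(\alph*)}]
    \item Apply it to $L=K$ itself. Then $\alpha K$ is $K$ with an isolated point adjoined. Countable tightness passes to closed subspaces, and here even to clopen ones, so $K$ is countably tight.
    \item Apply it to $L=K\setminus\{p\}$ for a point $p\in K$. Then $\alpha L=K$. This route requires checking that {\st} passes to open subspaces, which is immediate: intersect each member of each $\mathscr{U}_n$ with $L$, and the defining conditions persist for pairs of points of $L$.
\end{enumerate}

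Either route gives countable tightness of $K$. For route (a) I will verify the passage of countable tightness to subspaces in one line: if $A\subseteq K$ and $x\in\cl{A}^K$, then $x\in\cl{A}^{\alpha K}$, so some countable $A_0\subseteq A$ has $x\in\cl{A_0}^{\alpha K}$. Since $K$ is closed in $\alpha K$, this closure lies in $K$, so $x\in\cl{A_0}^K$.

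I do not expect a genuine obstacle. The only point needing care is matching the hypothesis of the cited corollary, which is stated for locally compact spaces and their compactifications, with our compact $K$. The remark above handles this.
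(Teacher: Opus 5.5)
Your proposal is correct and is the paper's own argument: the paper calls the corollary an immediate consequence of Theorem~\ref{thm:rn-characterization}, with countable tightness of $K$ supplied by the {\st}-tightness fact \cite[Corollary 2.4]{OrihuelaSmithTroyanski2012} recalled just before it, and separability then giving the RBCP via (v)~$\Rightarrow$~(i). Your routes (a) and (b) only make explicit how the locally compact hypothesis of the cited result covers a compact $K$, a detail the paper leaves tacit.
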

 
In the next part of the section, we show in Example \ref{ex:strictly_convex} below that there are examples of compact scattered spaces $K$ such that $C(K)$ has the RBCP, yet $K$ fails to have {\st}, i.e.~$C(K)$ fails to admit an equivalent norm whose dual norm is strictly convex. For this we need to consider the idea of growths of $\omega$. A compact space $K$ is known as a growth of $\omega$ if there is a compactification $\gamma\omega$ of $\omega$ (with the discrete topology) such that $K$ is homeomorphic to $\gamma\omega\setminus\omega$. For example, the 1-point compactification of a discrete space of cardinality at most $\mathfrak{c}$ is a growth of $\omega$:~in this case $\gamma\omega=K_\mathcal{A}$ for some almost disjoint family $\mathcal{A}$ of subsets of $\omega$. Another standard example of a growth of $\omega$ is the ordinal $\omega_1+1$ which, with respect to the standard interval topology, is compact and scattered. This follows by considering a transfinite sequence of subsets of $\omega$ that is strictly increasing with respect to eventual dominance (see e.g.~\cite{Nyikos1990}).

It is clear that there are a number of topological properties which pass from any growth $K$ of $\omega$ to the corresponding compactification $\gamma\omega$, and vice-versa. This applies in particular to the property of being scattered, or having {\st}, or having countable tightness. The examples in Example \ref{ex:strictly_convex}, facilitated by Proposition \ref{prop:AB-system} and Corollary \ref{cor:tree_growth} below, will be compactifications $\gamma\omega$, where $\gamma\omega\setminus\omega$ is homeomorphic to the $1$-point compactification of a tree (with respect to the interval topology) satisfying some mild conditions. That such $1$-point compactifications of trees can be growths of $\omega$ can be viewed as a simultaneous generalisation of the two standard examples of growths given above.

Before presenting Proposition \ref{prop:AB-system}, we need to introduce some further terminology and prove a combinatorial lemma. Recall that a tree, in the classical (not descriptive) set-theoretic sense, is a partially ordered set $(T,\preccurlyeq)$ having the property that the set of predecessors of any $t\in T$ is well-ordered. Trees are thus natural generalisations of ordinal numbers. Anticipating what will come later and for convenience, we shall always assume $T \cap \omega=\varnothing$. We will place a root $0_T \notin T$ below all the minimal nodes of $T$, i.e.~$0_T \prec t$ for all $t \in T$. Given $t\in T\cup\{0_T\}$, let $t^+$ denote the set of immediate successors of $t$ in $T$, that is, the set of minimal elements of $\set{u \in T}{t \prec u}$ (thus $0_T^+$ is the set of minimal elements of $T$). Further, denote by
\[
T^+ = \bigcup_{t\in T\cup\{0_T\}} t^+
\]
the set of minimal or successor elements of $T$; any element of $T$ that is not in $T^+$ will be called a limit element.

We equip each tree $T$ with its standard interval topology. Given $r \in T \cup \{0_T\}$ and $t \in T$, define the interval $(r,t]=\set{s \in T}{r \prec s \preccurlyeq t}$. The interval topology of $T$ has as its base all (compact) sets of the form $(r,t]$, with $r$ and $t$ as above. This topology is Hausdorff if and only if every non-empty totally ordered subset of $T$ has at most one minimal upper bound; we shall call any such tree with this property a Hausdorff tree and will insist that every tree under consideration is Hausdorff. The interval topology is clearly locally compact, and the fact that $(T,\preccurlyeq)$ is well-founded ensures that it is also scattered. With respect to this topology, $T^+$ is the set of isolated points.

Given $u,v \in T$, we define $u \wedge v$ to be the greatest element of $T\cup\{0_T\}$ less than or equal to both $u$ and $v$. This definition is sound because $u \wedge v$ is the (necessarily unique) minimal upper bound of the non-empty totally ordered set
\[
 \set{t \in T^+ \cup \{0_T\}}{t \preccurlyeq u,v}.
\]
When $u \perp v$, that is, when $u,v$ are incomparable, we have $u \wedge v \prec u,v$. We have $u \wedge v = 0_T$ if and only if $(0_T,u]\cap(0_T,v]=\varnothing$. We shall denote by $\htop(t)$ the order type of the set $(0_T,t)$ of strict predecessors of $t$ in $T$. For technical convenience, set $\htop(0_T)=-1$.

Before revealing our combinatorial lemma, Lemma \ref{lm:AB-system}, we need some terminology and an observation regarding eventual dominance. Given sets $A,B \subseteq \omega$, write $A \ssubstar B$ to mean $A\setminus B$ is finite, and $A \sstar B$ when $A\setminus B$ is finite and $B\setminus A$ is infinite. Now suppose we have sets $A_n,B_n \subseteq \omega$, $n\in\omega$, such that
\[
 A_0 \sstar A_1 \sstar A_2 \sstar \ldots \sstar B_2 \sstar B_1 \sstar B_0.
\]
Then it is well-known, using standard diagonal arguments, that there exist $A,B \subseteq \omega$ satisfying
\begin{equation}\label{eqn_gap}
 A_n \sstar A \sstar B \sstar B_n, \quad n \in \omega.
\end{equation}
For instance, the reader can verify that
\[
 A = \bigcup_{n \in \omega} A_n \cap \bigcap_{i\leq n} B_i \quad\text{and}\quad B=A \cup \set{k_n}{n \in \omega},
\]
where the $k_n \in \bigcap_{i \leq n} B_i\setminus A$ are distinct, satisfy \eqref{eqn_gap}.

\begin{lemma}\label{lm:AB-system}
 Let $T$ be a tree having cardinality at most $\mathfrak{c}$, and such that $\htop(t)<\omega_1$ whenever $t \in T$. Then there exists a family $(A_t,B_t)_{t\in T \cup \{0_T\}}$ of pairs of subsets of $\omega$ satisfying
 \begin{enumerate}[label=\textup{(\roman*)}]
  \item $A_t \sstar B_t$;
  \item $A_t \sstar A_u$ and $B_u \sstar B_t$ whenever $t \prec u$;
  \item $B_u \cap B_v \ssubstar A_{u \wedge v}$ whenever $u \perp v$.
 \end{enumerate}
\end{lemma}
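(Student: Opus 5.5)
We construct the family $(A_t,B_t)$ by transfinite recursion on $\htop(t)$, which by hypothesis runs over countable ordinals (together with $-1$ for the root). We start with $A_{0_T}=\varnothing$ and $B_{0_T}=\omega$. We then handle two kinds of nodes: successor (or minimal) nodes, which we treat all at once among the children of a given parent, and limit nodes, which we treat individually using the gap-filling observation \eqref{eqn_gap}. We maintain (i) and (ii) along every branch, and arrange (iii) only for pairs of immediate successors of a common node. Afterwards we show this local version implies (iii) in general.

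\emph{Successor step.} Suppose $(A_p,B_p)$ has been defined for some $p\in T\cup\{0_T\}$ and satisfies $A_p\sstar B_p$. Then $C:=B_p\setminus A_p$ is infinite, and $|p^+|\le|T|\le\mathfrak c$. Hence there is an almost disjoint family $\set{E_u}{u\in p^+}$ of infinite subsets of $C$; this is where the cardinality assumption on $T$ is used. For each $u\in p^+$, split $E_u$ into two disjoint infinite sets, let $D_u$ be one of them, and set
\[
A_u=A_p\cup D_u,\qquad B_u=A_p\cup E_u .
\]
Then $A_p\sstar A_u\sstar B_u\sstar B_p$, since $D_u$, $E_u\setminus D_u$ and $C\setminus E_u$ are infinite. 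The last of these holds because $C$ contains other members of the family, which are almost disjoint from $E_u$; if $p^+=\{u\}$ we simply choose $E_u$ coinfinite in $C$. Moreover, for distinct $u,v\in p^+$ we get $B_u\cap B_v\subseteq A_p\cup(E_u\cap E_v)\ssubstar A_p$.

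\emph{Limit step.} Let $t$ be a limit element, and assume $(A_s,B_s)$ is defined for all $s\prec t$ and satisfies (i) and (ii). The chain $(0_T,t)$ has countable limit order type $\htop(t)$, so it has a cofinal sequence $s_0\prec s_1\prec\cdots$. By (i) and (ii),
\[
A_{s_0}\sstar A_{s_1}\sstar\cdots\sstar B_{s_1}\sstar B_{s_0},
\]
so by \eqref{eqn_gap} there are sets $A_t\sstar B_t$ with $A_{s_n}\sstar A_t$ and $B_t\sstar B_{s_n}$ for all $n$. By cofinality, (ii) then holds for every $s\prec t$. The predecessors of $t$ determine the data uniquely, so this step is well defined. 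Since relations of the form $\sstar$ compose transitively along chains, (ii) holds for all comparable pairs throughout the recursion.

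\emph{Verifying (iii).} Let $u\perp v$ and put $w=u\wedge v\prec u,v$. Since $(w,u]$ is well-ordered and non-empty, it has a least element $u'\in w^+$; define $v'\in w^+$ similarly. We have $u'\neq v'$: otherwise $u'\preccurlyeq u,v$ with $w\prec u'$, contradicting the maximality of $w$. By (ii), or trivially if $u=u'$ or $v=v'$, we have $B_u\ssubstar B_{u'}$ and $B_v\ssubstar B_{v'}$. Therefore
\[
B_u\cap B_v\ssubstar B_{u'}\cap B_{v'}\ssubstar A_w
\]
by the successor step.

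The only genuinely delicate points are these: choosing the almost disjoint family of size up to $\mathfrak c$ inside $B_p\setminus A_p$ with infinite complement, which is what makes the cardinality hypothesis necessary; and the need for countable cofinality at limits, which is what makes $\htop(t)<\omega_1$ necessary, since \eqref{eqn_gap} only fills gaps of countable type. Apart from these, the argument is routine bookkeeping.
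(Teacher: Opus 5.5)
Your proof is correct and follows essentially the same route as the paper. Both arguments recurse on height. At successor stages they use an almost disjoint family inside $B_p\setminus A_p$; at limit stages they fill the gap via \eqref{eqn_gap} along a cofinal sequence; and both reduce (iii) to the immediate successors $u',v'\in w^+$.

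There are two small differences. To make $B_s\setminus B_t$ infinite, the paper adds an extra set $D_s$ to the almost disjoint family, whereas you use the other family members, or a coinfinite choice when $|p^+|=1$. The paper also fixes one sequence $\xi_n\to\alpha$ for each limit height, rather than choosing a cofinal chain separately for each node.
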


\begin{proof}
We proceed by transfinite recursion on $\htop(t)$, where $-1 \leq \htop(t)<\omega_1$. To begin, set $A_{0_T}=\varnothing$ and $B_{0_T}=\omega$. Now let $0 \leq \alpha < \omega_1$ and assume properties (i)--(iii) hold for all $t,u,v \in T \cup \{0_T\}$ satisfying $\htop(t),\htop(u),\htop(v) < \alpha$. There are two cases to consider. The first case is that $\alpha=0$ or $\alpha$ is a successor ordinal. Then $\alpha=\xi+1$, where $-1 \leq \xi <\omega_1$. We will define $A_t$ and $B_t$ whenever $\htop(s)=\xi$ and $t \in s^+$. This ensures that $A_t$ and $B_t$ are defined whenever $\htop(t)=\alpha$. Let $s \in T \cup \{0_T\}$ such that $\htop(s)=\xi$. Because $B_s\setminus A_s$ is infinite by (i) and $s^+$ has cardinality at most $\mathfrak{c}$, there exists an almost disjoint family $(D_t)_{t \in \{s\} \cup s^+}$ of infinite subsets of $B_s\setminus A_s$. For each $t \in s^+$, select an infinite set $C_t \subseteq D_t$ such that $D_t\setminus C_t$ is also infinite. Then define $A_t = A_s \cup C_t$ and $B_t = A_s \cup D_t$ whenever $t \in s^+$. The second case is that $\alpha$ is a limit ordinal. To manage this case, first fix a strictly increasing sequence of ordinals $(\xi_n)_{n\in\omega}$ that converges to $\alpha$. Given $t \in T$ with $\htop(t)=\alpha$, there exists a unique $s_n \in (0_T,t]$ such that $\htop(s_n)=\xi_n$. By property (ii), we have
 \[
  A_{s_0} \sstar A_{s_1} \sstar A_{s_2} \sstar \ldots \sstar B_{s_2} \sstar B_{s_1} \sstar B_{s_0}.
 \]
 Pick $A,B \subseteq \omega$ satisfying \eqref{eqn_gap} and set $A_t=A$, $B_t=B$.

 Now we prove that (i)--(iii) hold for all $t,u,v \in T \cup \{0_T\}$ satisfying $\htop(t),\htop(u),\htop(v) \leq \alpha$. That (i) holds is immediate from the above. Next, we tackle property (ii). Let $r \prec t$, where $\htop(t) \leq \alpha$. If $\htop(t)<\alpha$ then there is nothing to prove, so we assume $\htop(t)=\alpha$. If $\alpha=0$ or $\alpha$ is a successor ordinal, let $\alpha=\xi+1$ and $s \in T \cup \{0_T\}$ such that $t \in s^+$. Either $r=s$ or $r \prec s$, so by applying the inductive hypothesis in the second case if necessary, we have $A_r \ssubstar A_s \sstar A_s \cup C_t = A_t$ and $B_t = A_s \cup D_t \sstar B_s \ssubstar B_r$ (note $B_s\setminus B_t \supseteq D_s\setminus D_t$, which is infinite). Now assume that $\alpha$ is a limit ordinal. Then we can pick $n\in\omega$ such that $r \prec s_n \prec t$. By inductive hypothesis and the above, we have $A_r \sstar A_{s_n} \sstar A_t$ and $B_t \sstar B_{s_n} \sstar B_r$. Finally, we address property (iii). Assume $u \perp v$ and $\htop(u),\htop(v) \leq \alpha$. Set $t=u \wedge v \prec u,v$ and then let $u' \in t^+ \cap (0_T,u]$ and $v' \in t^+ \cap (0_T,v]$ (these elements are unique). From above, $(B_{u'} \cap B_{v'})\setminus A_t = (A_t \cup (D_{u'} \cap D_{v'}))\setminus A_t$ is finite, so $B_{u'} \cap B_{v'} \ssubstar A_t$. As $u' \preccurlyeq u$ and $v' \preccurlyeq v$, by (ii) (which we have just proved applies now to $u$ and $v$), we have $B_u \cap B_v \ssubstar B_{u'} \cap B_{v'} \ssubstar A_t$. This concludes the recursion.
\end{proof}

Now we can present Proposition \ref{prop:AB-system}, which elaborates on the way $\omega_1+1$ is shown to be a growth of $\omega$ in \cite{Nyikos1990}.

\begin{proposition}\label{prop:AB-system}
Let $T$ be a tree having cardinality at most $\mathfrak{c}$, and such that $\htop(t)<\omega_1$ whenever $t \in T$. Then $\alpha T$ is a growth of $\omega$.
\end{proposition}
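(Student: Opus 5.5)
We will build the compactification $\gamma\omega$ directly on the set $\omega \sqcup T \sqcup \{\infty\}$, where $\infty$ is the point at infinity of $\alpha T$, using the family $(A_t,B_t)_{t\in T\cup\{0_T\}}$ supplied by Lemma~\ref{lm:AB-system}. The guiding idea is that the basic neighbourhood $(r,t]$ of $t$ in $T$ should correspond to the subset $B_t\setminus B_r'$ of $\omega$, with $\omega$ playing the role of a dense set of isolated points. To make this work, first fix for each $r \in T\cup\{0_T\}$ the ``window'' $W_t := B_t\setminus A_{p(t)}$, where for $t\in T^+$ we let $p(t)$ denote the immediate predecessor (so $t\in p(t)^+$). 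For a basic interval $(r,t]$ we then set
\[
 V(r,t) := \{\,s\in T : r\prec s\preccurlyeq t\,\} \cup \bigl(B_t\setminus A_r\bigr).
\]
The neighbourhoods of $\infty$ will be complements of finite unions of sets $V(0_T,t)$ together with finite subsets of $\omega$, and the points of $\omega$ are isolated. The topology on $\gamma\omega$ is the one generated by these sets.

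The first step is to verify the axioms of a base and the Hausdorff property. For the base property, we intersect $V(r,t)\cap V(r',t')$. If $t$ and $t'$ are comparable, say $t\preccurlyeq t'$, then this intersection agrees with $V(\max(r,r'),t)$ up to a finite subset of $\omega$, by property (ii) of the lemma. If $t\perp t'$, then the intersection of the $\omega$-parts lies in $B_t\cap B_{t'}\ssubstar A_{t\wedge t'}$ by (iii), and this set is removed as long as $r$ or $r'$ lies at or above $t\wedge t'$; otherwise the tree parts meet inside $(r\vee r', t\wedge t']$ and we treat that case similarly. Separation of distinct points of $T$ follows from the Hausdorffness of the interval topology combined with (ii) and (iii).

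The second step is to show that $\gamma\omega$ is compact, that $\omega$ is open, dense and discrete, and that the subspace topology on $T\cup\{\infty\}$ is exactly that of $\alpha T$. Density holds because each $B_t\setminus A_r$ is infinite, by (i) and (ii). Each $V(r,t)$ is closed in $\gamma\omega$ and intersects $T\cup\{\infty\}$ in $(r,t]$.

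For compactness, take an open cover and choose a neighbourhood of $\infty$ from it; what remains is a finite union of sets $V(0_T,t_i)$ together with finitely many points of $\omega$. It therefore suffices to show that each $V(0_T,t)$ is compact, and we do this by induction on $\htop(t)$. Using compactness of the interval $(0_T,t]$, it is enough to check that every free ultrafilter on $B_t$ converges to some point $s\preccurlyeq t$. Concretely, take the least $s\preccurlyeq t$ such that the ultrafilter contains $B_t\setminus A_s$ for no\dots; more cleanly, take the supremum of those $s$ with $B_s$ in the ultrafilter. The limit-ordinal step uses the gap property \eqref{eqn_gap}, namely that $A_t\supseteq^* A_{s_n}$ in the limit case.

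We expect this compactness and convergence argument to be the main obstacle, in particular the limit case. There one must show that if the ultrafilter contains $B_{s_n}$ for every $n$, then it contains $B_t$. This does not follow from the stated properties of the lemma alone, so we would first try to refine the definition of the neighbourhoods (for instance by intersecting with $B_{s_n}$) rather than relying solely on $A_t,B_t$. Once $\gamma\omega$ is shown to be compact Hausdorff with $\gamma\omega\setminus\omega\cong\alpha T$, the proposition follows.
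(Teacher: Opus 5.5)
There is a genuine gap, and it lies in your choice of basic sets, not only in the compactness step you flagged. By Lemma~\ref{lm:AB-system}(ii), $B_u \sstar B_t$ whenever $t\prec u$. So the $B_t$ shrink as you go up the tree, and $B_t$ contains, modulo a finite set, the $\omega$-trace $B_u\setminus A_r$ of every node $u\succ t$. This breaks the construction in several places.

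\textbf{The base claim fails.} If $r\prec t\prec t'$ then $t\in V(r,t')$. But the $\omega$-part of $V(r,t)\cap V(r,t')$ agrees, modulo a finite set, with $B_{t'}\setminus A_r$, not with $B_t\setminus A_r$. Moreover, $(B_t\setminus B_{t'})\setminus A_s$ is infinite for every $s\prec t$, because $A_s\ssubstar A_{t'}\ssubstar B_{t'}$. Hence no $V(s,t)$, even with finitely many integers removed, lies inside $V(r,t')$.

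\textbf{Comparable nodes cannot be separated.} For $s\prec t\prec u$, the set $V(s,t)\cap V(t,u)\cap\omega$ contains $B_u\setminus A_u$ up to a finite set, and this is infinite. If $T$ is a chain (e.g.\ $T=\omega_1$, the model case), then for any $t\prec u$ every finite intersection of your sets containing $t$ meets every one containing $u$ in infinitely many integers. So the topology generated by the $V(r,t)$ is not Hausdorff.

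\textbf{Declaring the complements of $V(0_T,r)$ open makes $\omega$ non-dense.} For $r\prec t$, such a complement meets $V(r,t)$ in $(r,t]$ plus finitely many integers, since $B_t\ssubstar B_r$. Then $(r,t]$ becomes open and misses $\omega$.

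\textbf{The ultrafilter criterion inherits the defect.} A free ultrafilter containing $B_u\setminus A_u$ with $u\succ t$ contains every $B_s\setminus A_r$ with $r\prec s\preccurlyeq t$. Your criterion would therefore make it converge to all such $s$ at once. The limit-case obstacle you ran into is an artefact of this choice.

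The missing idea is to let the $\subseteq^*$-increasing sets $A_t$ carry the $\omega$-traces, and to use the $B_t$ only through (iii). The paper takes $W(s,t,n)=(s,t]\cup A_t\setminus(A_s\cup n)$ for $s\prec t$ and $n\in\omega$. With these sets everything goes through directly:
\begin{enumerate}[label=\textup{(\alph*)}]
\item \emph{Base property.} If $t\in W(r_i,u_i,n_i)$ for $i=1,2$, put $s=\max\{r_1,r_2\}$. Then (ii) gives $A_t\setminus A_s\ssubstar A_{u_i}\setminus A_{r_i}$.
\item \emph{Comparable nodes.} For $u\prec v$, the sets $W(0_T,u,0)$ and $W(u,v,0)$ are disjoint.
\item \emph{Incomparable nodes.} Let $t=u\wedge v$. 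Then $W(t,u,n)$ and $W(t,v,n)$ are disjoint for large $n$, because $A_u\cap A_v\ssubstar B_u\cap B_v\ssubstar A_t$ by (i) and (iii).
\item \emph{Compactness.} This needs neither ultrafilters nor a separate limit case; argue by induction on $\htop(t)$. A member of the cover containing $t$ contains some $W(r,t,m)$. If $r\preccurlyeq s$, only finitely many integers are left over. Otherwise, for large $p$ the set $W(s,r,p)\subseteq W(s,t,n)$ is finitely covered by the inductive hypothesis, and $W(s,t,n)\setminus(W(s,r,p)\cup W(r,t,m))$ is finite.
\end{enumerate}
Thus $\omega\sqcup T$ is locally compact Hausdorff with $\omega$ dense and discrete, and $\alpha T\cong\alpha(\omega\sqcup T)\setminus\omega$. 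Passing to the one-point compactification also spares you from prescribing neighbourhoods of $\infty$ by hand.
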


\begin{proof}
 Recalling the assumption that $T \cap \omega = \varnothing$, we shall first define a locally compact scattered Hausdorff topology on $\omega \sqcup T$ which extends the topology on $T$ and with respect to which $\omega$ is the set of isolated points. Take a family $(A_t,B_t)_{t \in T \cup \{0_T\}}$ of pairs of subsets of $\omega$ satisfying Lemma \ref{lm:AB-system} (i)--(iii). Given $s,t \in T \cup \{0_T\}$, $s \prec t$, and $n\in\omega$, define
 \[
  W(s,t,n) = (s,t] \cup A_t\setminus(A_s \cup n).
 \]
 We claim that the sets $W(s,t,n)$ (together with the singletons $\{n\}$, $n\in\omega$) form a basis of a Hausdorff topology on $\omega \sqcup T$. Firstly, we verify that these sets form a basis of a scattered topology which extends that of $T$. Given that each $n\in\omega$ is isolated, it is enough to prove that if $t \in W(r_1,u_1,n_1) \cap W(r_2,u_2,n_2) \cap T$, then there exists $s \prec t$ and $n \in \omega$ such that $W(s,t,n) \subseteq W(r_i,u_i,n_i)$, $i=1,2$. Given such $t$, we have $r_1,r_2 \prec t$, so they are comparable and we can set $s=\max\{r_1,r_2\} \prec t$. Given $i=1,2$, we have $(s,t] \subseteq (r_i,u_i]$ and, by Lemma \ref{lm:AB-system} (ii), $ A_t\setminus A_{u_i}$ and $ A_{r_i}\setminus A_s$ are finite, which implies $A_t\setminus A_s \ssubstar A_{u_i}\setminus A_{r_i}$. This means we can pick $n\in\omega$ large enough to satisfy
 \[
 A_t\setminus (A_s \cup n) \subseteq A_{u_i}\setminus A_{r_i}, \quad i=1,2.
 \]
We conclude that $W(s,t,n) \subseteq W(r_i,u_i,n_i)$, $i=1,2$, as required. Hence we have a basis of a topology on $\omega\sqcup T$. Because $W(s,t,n) \cap T = (s,t]$, it is clear that this topology extends that of $T$. As $T$ is scattered and each point of $\omega$ is isolated, it follows that $\omega\sqcup T$ is also scattered.

Next, we demonstrate that $\omega\sqcup T$ is Hausdorff. Given the definition of the $W(s,t,n)$ and the fact that each $n\in\omega$ is isolated, it is easy to see that we only need to verify this when $u,v \in T$ are distinct. There are two cases. If $u \prec v$ then
\[
 W(0_T,u,0) \cap W(u,v,0) = ((0_T,u] \cup A_u) \cap ((u,v] \cup A_v\setminus A_u) = \varnothing.
\]
Otherwise, $u \perp v$, meaning that $A_u \cap A_v \ssubstar B_u \cap B_v \ssubstar A_t$ by Lemma \ref{lm:AB-system} (i) and (iii), where $t:=u \wedge v \prec u,v$. Observe that $(t,u] \cap (t,v] = \varnothing$, so given $n\in\omega$,
\[
 W(t,u,n) \cap W(t,v,n) = (A_u \cap A_v)\setminus(A_t \cup n),
\]
which is empty for a large enough choice of $n$. Hence $\omega\sqcup T$ is Hausdorff.

Finally, we show that the $W(s,t,n)$ are compact. We use induction on $\htop(t)$. Let $\alpha < \omega_1$ and assume $W(s,t,n)$ is compact whenever $s \prec t$, $n\in\omega$ and $\htop(t) < \alpha$. Now let $s \prec t$, $n\in \omega$ and $\htop(t)=\alpha$, and let $\mathscr{U}$ be an open cover of $W(s,t,n)$ of sets open in $\omega\sqcup T$. Pick $U \in \mathscr{U}$ such that $t\in U$, and then $r \prec t$ and $m \in \omega$ such that $W(r,t,m) \subseteq U$. If $r \preccurlyeq s$ then $W(s,t,n)\setminus U \subseteq W(s,t,n)\setminus W(r,t,m)$ is finite, thus $W(s,t,n)$ can be covered by finitely many elements of $\mathscr{U}$. Otherwise, $s \prec r$, and we can pick $p \in \omega$ large enough so that $W(s,r,p) \subseteq W(s,t,n)$. By inductive hypothesis, $W(s,r,p)$ can be covered by finitely many elements of $\mathscr{U}$, and as $W(s,t,n)\setminus(W(s,r,p) \cup W(r,t,m))$ is finite, the same can be said of $W(s,t,n)$. This concludes the induction.

Hence $\alpha(\omega\sqcup T)$ is a compactification of $\omega$ and there is a natural homeomorphism from $\alpha T$ onto $\alpha(\omega\sqcup T)\setminus \omega$. Therefore $\alpha T$ is a growth of $\omega$.
\end{proof}

\begin{corollary}\label{cor:tree_growth}
Let $T$ be a tree having cardinality at most $\mathfrak{c}$ that does not admit any uncountable totally ordered subsets. Then $\alpha T$ is homeomorphic to $\gamma\omega\setminus\omega$ for some countably tight scattered compactification $\gamma\omega$ of $\omega$.
\end{corollary}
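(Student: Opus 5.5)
Since $T$ has no uncountable totally ordered subsets, every $t\in T$ has $(0_T,t)$ a countable well-ordered set, so $\htop(t)<\omega_1$. Hence Proposition~\ref{prop:AB-system} applies, and we take $\gamma\omega=\alpha(\omega\sqcup T)$ as constructed in its proof. The proof already shows that $\omega\sqcup T$ is locally compact, scattered and Hausdorff, with $\omega$ open, discrete and dense. Its one-point compactification is therefore scattered, since adjoining a single point preserves scatteredness. It also gives the natural homeomorphism $\alpha T\cong\gamma\omega\setminus\omega$. So the only remaining task is to show that $\gamma\omega$ is countably tight.

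For tightness, I will first treat $\alpha T$. Following the remark before Lemma~\ref{lm:AB-system}, I would rather prove countable tightness of $\gamma\omega$ directly, in three cases.

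\emph{Points of $\omega$.} These are isolated, so there is nothing to check.

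\emph{Points $t\in T$.} Let $t\in\cl{A}$ with $A\subseteq\omega\sqcup T$. If $t$ is a successor or minimal node, then $(s,t]$ with $s$ the predecessor gives the neighbourhood base $\{t\}\cup A_t\setminus(A_s\cup n)$, which is countable. If $t$ is a limit, then $\htop(t)$ has countable cofinality, so $(0_T,t)$ contains a cofinal sequence $s_k$. The sets $W(s_k,t,n)$, $k,n\in\omega$, then form a countable neighbourhood base at $t$. Indeed, any basic $W(r,u,m)\ni t$ contains some $W(s_k,t,n)$, by the argument in the proof of Proposition~\ref{prop:AB-system}. So every point of $\omega\sqcup T$ is first countable, and countable tightness holds there trivially.

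\emph{The point $\infty$.} Suppose $\infty\in\cl{A}$; we may assume $\infty\notin A$. This is the main step. Neighbourhoods of $\infty$ are complements of compact subsets of $\omega\sqcup T$. If $A$ is not contained in any compact set, I want a countable $A_0\subseteq A$ also not contained in any compact set. Equivalently, I want a countable subset of $A$ that is closed discrete in $\omega\sqcup T$, or at least has non-compact closure.

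My approach is as follows. If $A\cap\omega$ already accumulates at $\infty$, pass to $A\cap T$ after replacing points of $A\cap\omega$ by limits of subsequences. Working in the tree, use the no-uncountable-chains hypothesis. Suppose every countable $A_0$ had compact closure. Then I would build, by recursion of length $\omega_1$, an increasing sequence of countable sets whose closures are compact. Compact subsets of $T$ are covered by finitely many intervals $(r,t]$, since the intervals $(0_T,t]$ are compact and open. I would then extract either an uncountable chain, which is a contradiction, or a point of $A$ outside every chosen compact set.

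The anticipated obstacle is exactly this analysis at $\infty$. A cleaner alternative, which I would try first, is to invoke the known result that $\alpha T$ is countably tight when $T$ has no uncountable chains: any point in the closure of a set has a countable subset of the set whose closure contains it, obtained via a pressing-down or $\omega_1$-chain argument. I would then lift this to $\gamma\omega$. For that I use that compact subsets of $\omega\sqcup T$ meet $T$ in compact sets, and that an infinite subset of $A\cap\omega$ not almost contained in finitely many $A_t$ already witnesses $\infty$ through a countable set.
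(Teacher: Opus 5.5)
Your reduction to Proposition~\ref{prop:AB-system} (including the observation that $\htop(t)<\omega_1$ because $(0_T,t)$ is a chain), the scatteredness argument and the first-countability argument at points of $T$ are all fine. There is a genuine gap at $\infty$, which is the only substantive part of the statement, and neither of your sketches closes it. In the recursion sketch, the alternative ``a point of $A$ outside every chosen compact set'' contradicts nothing. Every $(0_T,t]$ is a countable chain, and every compact subset of $T$ lies in finitely many of these, so compact subsets of $T$ are countable and any uncountable $A$ escapes each of them anyway. What you need is a \emph{single countable} subset of $A$ that escapes every compact set simultaneously, and you never say what it is. Your fallback of invoking the ``known result'' simply assumes the claim, and a pressing-down (Fodor) argument is not the relevant tool.

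The missing idea is an infinite antichain, which is how the paper argues. If $A\subseteq T$ is countable there is nothing to prove. If $A$ is uncountable, colour pairs of points of $A$ by comparability. Since $T$ has no uncountable chains, the Dushnik--Miller theorem $\omega_1\to(\omega_1,\omega)^2$ gives an infinite antichain $B\subseteq A$. Any compact subset of $T$ is covered by finitely many of the open sets $(0_T,t]$. Each of these is a chain and so meets $B$ in at most one point. Hence $B$ lies in no compact set, and $\infty\in\cl{B}$.

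Your handling of $A\cap\omega$ is also muddled. Replacing points of $A$ by limits of subsequences is not allowed, since those limits need not lie in $A$. It is also unnecessary, because $A\cap\omega$ is countable. Either $\infty\in\cl{A\cap\omega}$ and you are done, or $\infty\in\cl{A\cap T}$. In the second case, since $\gamma\omega\setminus\omega$ is closed and homeomorphic to $\alpha T$, what remains is exactly countable tightness of $\alpha T$ at $\infty$. This is why the paper treats tightness of $\gamma\omega$ as an immediate consequence of that of $\alpha T$.
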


\begin{proof}
By Proposition \ref{prop:AB-system}, $\alpha T$ is homeomorphic to $\gamma\omega\setminus\omega$ for some scattered compactification $\gamma\omega$ of $\omega$. The countable tightness of $\gamma\omega$ is an immediate consequence of that of $\alpha T$, which follows by a standard argument that we include for completeness. Let $x \in \cl{A} \subseteq \alpha T$. If $x=t \in T$ then $A\cap(0_T,t]$ is countable and $x \in \cl{A\cap(0_T,t]}$. Hereafter assume $x=\infty$ and $A$ is uncountable. Given that $T$ admits no uncountable totally ordered subsets, a standard application of the Dushnik-Miller theorem yields that $A$ must admit a countably infinite antichain $B$, and by virtue of the topology on $\alpha T$, we have $x=\infty \in \cl{B}$.
\end{proof}

At last, we can produce examples of compact spaces $K$ such that $C(K)$ has the RBCP, but $C(K)$ admits no equivalent norm having a strictly convex dual norm.

\begin{example}\label{ex:strictly_convex}
Let $T$ be any tree having cardinality at most $\mathfrak{c}$ that does not admit any uncountable totally ordered subsets, and which also fails {\st}. An example of this is $T=\sigma Y$ (see e.g.~\cite[pp.~405--406]{SmithTroyanski2010}). By Corollary \ref{cor:tree_growth}, $\alpha T$ is homeomorphic to $\gamma\omega\setminus\omega$ for some countably tight scattered compactification $\gamma\omega$ of $\omega$. By Theorem \ref{thm:rn-characterization}, $C(\gamma\omega)$ has the RBCP. On the other hand, $\gamma\omega$ fails {\st} because $T$ fails {\st}. By \cite[Proposition 3.2]{OrihuelaSmithTroyanski2012}, $C(\gamma\omega)$ admits no equivalent norm having a strictly convex dual norm.
\end{example}

We continue in the spirit of finding Asplund $C(K)$-spaces that have the RBCP but behave badly with respect to renormings. The next example makes use of the forcing construction of Brech and Koszmider in \cite{BrechKoszmider2011}. Recall that a topological space is called hereditarily separable if every subspace of it is separable. Hereditarily separable spaces are clearly countably tight.

\begin{example}\label{ex:Brech_Koszmider}
By Theorem~3.2 and Corollaries~3.3 and 3.5 in \cite{BrechKoszmider2011}, there is, in a forcing extension, a compact scattered space \(K_{\mathrm{BK}}\) that is hereditarily separable (moreover, every finite power of it is hereditarily separable) and \(\dens C(K_{\mathrm{BK}})=\aleph_2\). The same results show that, in this model, \(C(K_{\mathrm{BK}})\) admits no equivalent Fr\'echet smooth norm and contains no uncountable biorthogonal system. On the other hand, as $K_{\mathrm{BK}}$ is scattered, separable and countably tight, $C(K_{\mathrm{BK}})$ has the RBCP by Theorem \ref{thm:rn-characterization}.
\end{example}

We conclude the section by finding an even more extreme example of a $C(K)$-space that has the RBCP. This one fails even to be weak Asplund. In \cite{kalenda:99}, Kalenda defines a class of compact spaces modelled on the classical Alexandrov-Urysohn `double arrow space' or `split interval'. He defines this class in terms of arbitrary perfect compact subsets $K\subseteq \R$, but here we will assume $K=[0,1]$. Let $A \subseteq (0,1)$ be arbitrary and set
\[
 L_A = ((0,1] \times \{0\}) \cup ((\{0\} \cup A) \times \{1\}).
\]
We equip $L_A$ with the standard lexicographic ordering:~$(x_1,x_2) \leq (y_1,y_2)$ if and only if $x_1 < y_1$ or $x_1=y_1$ and $x_2 \leq y_2$. Then the induced order topology on $L_A$ is compact and separable (indeed, hereditarily separable) \cite[Proposition 2]{kalenda:99}. When $A=(0,1)$ we recover the usual split interval. We will use Corollary \ref{cor:ck-general-criteria} to show that, in a particularly important case, $C(L_A)$ has the RBCP.

That  following result generalises a fact that is well-known in the case $A=(0,1)$.

\begin{proposition}\label{prop:split-exact-sequence}
 Let $A \subseteq (0,1)$ be arbitrary. The bounded linear operator
  \[
   \map{T}{C(L_A)}{c_0(A)}, \quad Tf(a) = f(a,1)-f(a,0), \quad a \in A,
  \]
 is surjective and its kernel is linearly isometric to $C[0,1]$. Consequently, $C(L_A)$ has (C).
\end{proposition}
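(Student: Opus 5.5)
The plan is to prove three things in turn: that $T$ maps into $c_0(A)$, that it is surjective, and that its kernel is $C[0,1]$; property (C) then follows from a three-space argument.

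\textbf{$T$ lands in $c_0(A)$.} Fix $f\in C(L_A)$ and $\varepsilon>0$. Suppose, for contradiction, that infinitely many $a\in A$ satisfy $|f(a,1)-f(a,0)|\geq\varepsilon$. Pass to a monotone sequence $a_n\to x\in[0,1]$. If $a_n\uparrow x$, then both $(a_n,0)$ and $(a_n,1)$ converge in the order topology to $(x,0)$. Continuity then forces the differences to tend to $0$, which is a contradiction. If $a_n\downarrow x$, both sequences converge to $(x,1)$, or to $(x,0)$ if $x\notin A\cup\{0\}$, and we get the same contradiction. Hence $Tf\in c_0(A)$, and clearly $\|T\|\leq 2$.

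\textbf{Surjectivity.} I would use the open mapping idea. It suffices to show that $T(B_{C(L_A)})$ contains a fixed multiple of the ball on the dense subspace of finitely supported vectors, and then apply the standard successive-approximation lemma. For a single $a\in A$, the set $\{(y,i)\le(a,0)\}$ is clopen, since $(a,0)$ has immediate successor $(a,1)$. So its indicator $g_a$ is continuous with $Tg_a=-e_a$. For distinct $a_1,\dots,a_k$ and scalars $c_i$ with $|c_i|\le 1$, I want $f$ with $\|f\|\le 1$ and $Tf=\sum c_ie_{a_i}$. Order $a_1<\dots<a_k$ and take $f$ piecewise linear in the first coordinate, with jumps at the points $(a_i,0)\to(a_i,1)$. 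Concretely, let $f$ be linear on each $[a_i,a_{i+1}]$ part and jump by $c_i$ between $(a_i,0)$ and $(a_i,1)$, with values kept in $[-1,1]$. This can be done by letting $f$ return to $0$ linearly between consecutive jumps: $f(a_i,0)=0$, $f(a_i,1)=c_i$, then linear back to $0$ before $a_{i+1}$. Such $f$ is continuous and $\|f\|\le1$. So $T(B)$ contains the unit ball of $c_{00}(A)$, hence its closure contains $B_{c_0(A)}$, and $T$ is surjective (even a quotient onto the ball of radius $1$ up to $\varepsilon$).

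\textbf{Kernel.} The map $\pi:L_A\to[0,1]$, $\pi(x,i)=x$, is continuous and surjective. Composition $g\mapsto g\circ\pi$ is an isometric embedding of $C[0,1]$ into $\ker T$. Conversely, take $f\in\ker T$. Then $f$ agrees on the two points of each fibre $\pi^{-1}(a)$ with $a\in A\cup\{0\}$. The point $(0,0)$ is missing, but $\pi^{-1}(0)=\{(0,1)\}$. Hence $f=g\circ\pi$ for some $g$, and $g$ is continuous because $\pi$ is a closed quotient map. So $\ker T$ is isometric to $C[0,1]$.

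\textbf{Property (C).} I expect this step to be the main obstacle. $C[0,1]$ is separable, so it has (C). $c_0(A)$ is WCG, so it has (C) (via weakly Lindelöf, or via Pol's characterization with $w^*$-angelicity of $\ell_1(A)$). I would then invoke the three-space property for (C), or argue directly with Pol's criterion. For the direct argument, dual balls of $c_0(A)$ are $w^*$-angelic and $C[0,1]^*$ balls are $w^*$-metrizable. Given $A'\subseteq C(L_A)^*$ and $\mu\in\overline{A'}^{w^*}$, first use the quotient $C(L_A)^*/\ell_1(A)\cong C[0,1]^*$ to reduce modulo the subspace $T^*\ell_1(A)$, then use angelicity on that piece. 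To avoid this, I would instead cite the known fact that (C) is a three-space property. Alternatively, since $L_A$ is a compact line, the dual ball of $C(L_A)$ is $w^*$-angelic (Radon measures on a hereditarily separable compact are countably tight), and weakly Lindelöf determination gives (C). I would take whichever citation is cleanest, preferring the three-space route.
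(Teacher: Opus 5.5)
Your proof is correct and follows the paper's route. The paper likewise obtains the exact sequence $0\to C[0,1]\to C(L_A)\to c_0(A)\to 0$, with $T$ as the quotient map; it omits the details you supply, pointing instead to an adaptation of Fabian's argument for the split interval. It then deduces (C) from Pol's three-space result \cite[Proposition 3.2(1)]{Pol1980}, using that $C[0,1]$ and $c_0(A)$ have (C).

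Do discard your closing ``alternatively'' remark. For uncountable $A$ the compact $L_A$ is separable but not metrizable, so it is not Corson, and hence $C(L_A)$ is not WLD. The parenthetical claim about Radon measures on hereditarily separable compacta is also not a result you can cite.
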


\begin{proof}
That $T$ has the given properties follows from an adaptation of the first part of the proof of \cite[Theorem 2.3.1]{fabian:97}; for this reason we omit the proof. Given that $C[0,1]$ and $c_0(A)$ both have (C), we can apply \cite[Proposition 3.2 (1)]{Pol1980} to conclude that $C(L_A)$ also has (C).
\end{proof}

In \cite{moors:somasundaram:06}, Moors and Somasundaram, with help from Fremlin and Preiss, constructed a set $A \subseteq (0,1)$ such that the resulting space $C(L_A)$ is a GDS but is not weak Asplund (thus solving a problem of Larman and Phelps from 1979). The result of Preiss, Phelps, and Namioka \cite{preiss:phelps:namioka:90} implies that \(C(L_A)\) admits no equivalent G\^ateaux smooth norm. Despite this atrocious behaviour, as $C(L_A)$ is a GDS, Proposition \ref{prop:split-exact-sequence} and Corollary \ref{cor:ck-general-criteria} yield immediately the final result of the section.

\begin{corollary}\label{co:Moors-Somasundaram}
 The Moors-Somasundaram space $C(L_A)$ has the RBCP.
\end{corollary}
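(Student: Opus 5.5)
The plan is to apply Corollary~\ref{cor:ck-general-criteria} to $K=L_A$, where $A\subseteq(0,1)$ is the Moors--Somasundaram set, by verifying its hypotheses one at a time and then invoking the implication (v)~$\Rightarrow$~(i).

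First, $C(L_A)$ is a GDS. This is exactly the content of the Moors--Somasundaram construction \cite{moors:somasundaram:06}, which we take as given.

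Second, $C(L_A)$ has (C). This is the last assertion of Proposition~\ref{prop:split-exact-sequence}, which holds for arbitrary $A\subseteq(0,1)$. There $C(L_A)$ is an extension of $c_0(A)$ by a copy of $C[0,1]$, both of which have (C), and Pol's three-space result then applies.

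Third, $L_A$ is separable. By \cite[Proposition 2]{kalenda:99} the order topology on $L_A$ is hereditarily separable, in particular separable. One can also see this directly: the countable set $(\mathbb{Q}\cap(0,1])\times\{0\}$ together with the point $(0,1)$ is dense. Indeed, any non-degenerate open interval of $L_A$ in the lexicographic order contains points $(q,0)$ with $q$ rational, and $(0,1)$ is handled separately (it is the minimum of $L_A$ in this order).

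With these three facts in place, condition (v) of Corollary~\ref{cor:ck-general-criteria} holds, so (i) holds and $C(L_A)$ has the RBCP. There is no real obstacle here, since all the substantive work lies in Proposition~\ref{prop:split-exact-sequence} and in the cited GDS result. The only point needing care is that Corollary~\ref{cor:ck-general-criteria} requires both the GDS property and (C) simultaneously for the same space, and both have been established for this particular $A$.
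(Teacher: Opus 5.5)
Your proposal is correct and follows the paper's argument: the paper likewise takes the GDS property from Moors--Somasundaram, property (C) from Proposition~\ref{prop:split-exact-sequence}, and separability of $L_A$ from \cite{kalenda:99}, then applies the implication (v)~$\Rightarrow$~(i) of Corollary~\ref{cor:ck-general-criteria}. Your direct density check with the rational points $(q,0)$ is valid and is just an optional supplement to the citation.
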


\section{Problems and remarks}\label{sec:remarks_problems}

Our first problem concerns scattered spaces and renormings. As we saw in Example \ref{ex:strictly_convex}, there exists a compact separable scattered space $K$ such that $C(K)$ has the RBCP but no equivalent norm whose dual norm is strictly convex. According to Example \ref{ex:Brech_Koszmider}, it is consistent that there is a compact separable scattered space $K$ such that $C(K)$ has the RBCP but no equivalent Fr\'echet smooth norm. It is natural to ask whether there is a ZFC version of Example \ref{ex:Brech_Koszmider}.

Haydon established that there exists a compact scattered (non-separable) space $K$ such that $C(K)$ admits no equivalent strictly convex norm or G\^ateaux smooth norm \cite{Haydon:1992}. Here, $K=\alpha T$, where $T$ is a tree having cardinality $\mathfrak{c}$ and no uncountable branches, so it is countably tight. While $K$ itself is not separable, according to Proposition \ref{prop:AB-system} $K$ is homeomorphic to $\gamma\omega\setminus\omega$ for some compactification $\gamma\omega$ of $\omega$. However, the arguments applied to $K$ in \cite{Haydon:1992} to show the non-existence of equivalent strictly convex norms and G\^ateaux smooth norms on $C(K)$ are similar (the latter is an elaboration of the former), and any attempt to adapt them to $\gamma\omega$ is doomed to fail because the dual of $C(\gamma\omega)$ is $w^*$-separable, and it is well-known that any such Banach space admits a bounded linear injective operator into $\ell_2$, and thus admits an equivalent strictly convex norm.

These observations prompt the following problem, part of which can be stated without reference to the RBCP and may be of independent interest.

\begin{problem}
 Does there exist, in ZFC, a compact, scattered and separable space $K$ such that $C(K)$ admits no equivalent Fr\'echet smooth norm? If so, can we also ensure that $C(K)$ has the RBCP? Can we go further and replace Fr\'echet smooth above by G\^ateaux smooth?
\end{problem}

Again, as stated in the Introduction, $C(\beta\omega) \equiv \ell_\infty$ has the BCP but fails the RBCP. Clearly $\beta\omega$ is not scattered. If $K$ is compact and scattered then it has a countable $\pi$-base if and only if it is separable (if $K$ is separable then its set $I$ of isolated points must be countable, whence $\set{\{x\}}{x \in I}$ is a countable $\pi$-base). Thus $C(K)$ has the BCP and UBCP in these circumstances. As we have seen, many such spaces also have the RBCP. However, we were unable to establish whether or not $C(K)$ has the RBCP whenever $K$ is scattered and separable. This prompts our second problem, which is connected to the first.

\begin{problem}
Let $K$ be compact, scattered and separable. Does $C(K)$ have the RBCP?
\end{problem}

One can ask as a special case whether there is, in ZFC, a compactification $\gamma\omega$ of $\omega$ such that $\gamma\omega\setminus \omega$ is homeomorphic to $\omega_1+1$ and $C(\gamma\omega)$ fails the RBCP.

\section*{Acknowledgments}
The authors are grateful to the Institute for Theoretical Sciences at Westlake University for hosting the 2026 Summer School in ``Lipschitz-free Spaces and Metric Geometry". This project started amid this stimulating environment.
We also thank Thierry De Pauw and the organizing team for their warm hospitality and support throughout the programme.

\bibliographystyle{amsplain}
\bibliography{cite}

\end{document}